\documentclass{jcmlatex}

\usepackage[letterpaper,margin=1in]{geometry}
\usepackage{moreverb}
\usepackage{amssymb,amsmath}
\usepackage{graphicx}

\usepackage{enumerate}

\usepackage{setspace}

\usepackage[colorlinks,bookmarksopen,bookmarksnumbered,linkcolor=blue,citecolor=blue,urlcolor=blue]{hyperref}

\usepackage{algorithmic,algorithm}

\newtheorem{Theorem}{Theorem}[section]
\newtheorem{Lemma}[Theorem]{Lemma}
\newtheorem{Proposition}[Theorem]{Proposition}
\newtheorem{Corollary}[Theorem]{Corollary}
\newtheorem{Definition}[Theorem]{Definition}
\newtheorem{Remark}[Theorem]{Remark}

\newcommand{\R}{\mathbb{R}}
\newcommand{\Z}{\mathbb{Z}}

\newcommand{\blm}{\begin{Lemma}}
\newcommand{\elm}{\end{Lemma}}
\newcommand{\bthm}{\begin{Theorem}}
\newcommand{\ethm}{\end{Theorem}}
\newcommand{\bcor}{\begin{Corollary}}
\newcommand{\ecor}{\end{Corollary}}
\newcommand{\bdf}{\begin{Definition}}
\newcommand{\edf}{\end{Definition}}

 \newcommand{\bone}{\ensuremath{\mathbf {1}}}

\setcounter{page}{1}

\begin{document}

\markboth{J. C. URSCHEL, X. HU, J. XU, L. T. ZIKATANOV}{A Cascadic Multigrid Algorithm for Computing the Fiedler Vector of Graph Laplacians}

\title{A CASCADIC MULTIGRID ALGORITHM FOR COMPUTING \\ THE FIEDLER VECTOR OF GRAPH LAPLACIANS}

\author{John C. Urschel
\thanks{Department of Mathematics, Penn State University, Pennsylvania, USA \\ Email: urschel@math.psu.edu}
\and
Xiaozhe Hu
\thanks{Department of Mathematics, Tufts University, Massachusetts, USA \\ Email: Xiaozhe.Hu@tufts.edu}
\and
Jinchao Xu
\thanks{Department of Mathematics, Penn State University, Pennsylvania, USA \\ Email: xu@math.psu.edu}
\and
Ludmil T. Zikatanov
\thanks{Department of Mathematics, Penn State University, Pennsylvania, USA \\ 
Institute of Mathematics and Informatics, Bulgarian Academy of Sciences, Sofia, Bulgaria  \\ 
Email: ludmil@psu.edu}}

\maketitle

\begin{abstract}
In this paper, we develop a cascadic multigrid algorithm for fast computation of the Fiedler vector of a graph Laplacian, namely, the eigenvector corresponding to the second smallest eigenvalue. This vector has been found to have applications in fields such as graph partitioning and graph drawing. The algorithm is a purely algebraic approach based on a heavy edge coarsening scheme and pointwise smoothing for refinement.  To gain theoretical insight, we also consider the related cascadic multigrid method in the geometric setting for elliptic eigenvalue problems and show its uniform convergence under certain assumptions. Numerical tests are presented for computing the Fiedler vector of several practical graphs, and numerical results show the efficiency and optimality of our proposed cascadic multigrid algorithm. 
\end{abstract}

\begin{classification}
65N55, 65N25.
\end{classification}

\begin{keywords}
Graph Laplacian, Cascadic Multigrid, Fiedler vector, Elliptic eigenvalue problems.
\end{keywords}

\section{Introduction}
Computation of the Fiedler vector of graph Laplacians has proven to be a relevant topic, and has found applications in areas such as graph partitioning and graph drawing \cite{KCH}. There have been a number of techniques implemented for computation of the Fiedler vector, most notably by Barnard and Simon \cite{BS}. They implemented a multilevel coarsening strategy, using maximal independent sets and created a matching from them. For the refinement procedure, Rayleigh quotient iteration was used. We note that the term refinement refers to the smoothing process that occurs, and has a different meaning in the multigrid literature. Although at the time this was significantly faster than the standard recursive spectral bisection, it leaves room for improvement. The majority of the improvement has been in the form of coarsening algorithms. Better coarsening techniques, such as heavy edge matching (HEM), have been used more frequently, and have exhibited much shorter run times \cite{KK1,KK2}. 

For more general eigenproblems of symmetric positive definite matrices, techniques such as Jacobi-Davison \cite{SV} and the Locally Optimal Preconditioned Conjugate Gradient Method~\cite{KN} (see also~\cite{LAOK}) have been used and shown to give good approximations to eigenvalues and eigenvectors. These techniques can easily be extended to computing a Fiedler vector.  Other eigensolvers are provided by setting an Algebraic MultiGrid (AMG) tuned specifically for graph Laplacians (see, e.g. Lean AMG~\cite{LB}) as a preconditioner in the LOPCG Method. 

In this paper, we introduce a new and fast coarsening algorithm, based on the conecpt of heavy edge matching, with a more aggressive coarsening procedure. For refinement, we implement a form of power iteration. For both our coarsening and refinement procedures we have created algorithms that are straightforward  to implement. While heavy edge matching is complicated and tough to implement in high level programming languages, since it involves selecting an edge with heaviest weight between two unmatched vertices, heavy edge coarsening is significantly easier because we do not need to worry about whether a vertex has been aggregated or not.  For the refinement procedure, power iteration does not require the inversion of a matrix, making its use much more straightforward than for Rayleigh quotient iteration, which requires some technique to approximately invert the matrix.  

Based on these two improved components, we propose a cascadic multigrid (CMG) method to compute the Fiedler vector. The CMG method has been treated in the literature, most notably by Bornemann and Deuflhard \cite{BD1,BD2}, Braess, Deuflhard, and Lipnikov \cite{BDL}, and Shaidurov \cite{S1,S2,S3}. However, little has been done with respect to the elliptic eigenvalue problem. Our technique is a purely algebraic approach which only uses the given graph.  Moreover, although the purely algebraic approach is technically difficult to analyze, we consider the CMG method for the elliptic eigenvalue problem in the geometric setting.  Based on the standard smoothing property and approximation property, we show that the geometric CMG method converges uniformly for the model problem, which indirectly provides theoretical justification of the efficiency of the CMG method.  This also shows the potential of our CMG method for solving other eigenvalue problems from different applications.  

The remainder of the paper is organized as follows.  In Section \ref{sec:CMG-Fiedler}, we briefly review the Fiedler vector and introduce our cascadic multigrid method for computing the Fiedler vector of a graph Laplacian.  The cascadic multigrid method for elliptic eigenvalue problems is proposed in Section \ref{sec:GCMG} and its convergence analysis is also provided.  Section \ref{numerics} presents numerical experiments to support the theoretical results of CMG method for elliptic eigenvalue problems and demonstrate its efficiency for computing the Fiedler vector of some graph Laplacian problems from real applications.  We conclude the paper in Section \ref{sec:conclusion} by some general remarks on this work and proposed future work. 

\section{Cascadic MG Method for Computing the Fiedler Vector} \label{sec:CMG-Fiedler}

We begin by formally introducing the concept of a graph Laplacian and Fiedler vector. We start with the concept of a graph. A weighted graph $G=(V,E,w)$ is said to be undirected if the edges have no orientation. A graph is a multigraph if $(i,i) \notin E$ for all $1 \le i \le \vert V \vert$ ($\vert V \vert$ is the number of vertices).  For the remainder of this paper, we assume that all graphs are undirected and multigraphs. 

We consider the task of representing a graph in matrix form. One of the most natural representations is through its Laplacian. The Laplacian of a graph is defined as follows: 
\bdf
Let $G=(V,E,w)$ be a weighted graph. We define the Laplacian matrix of $G$, denoted $L(G) \in \R^{n \times n}$ (or just $L$ for short), $n= \vert V \vert$, as follows:
\begin{equation*}
L(G)_{(i,j)}:= \biggl \{  
\begin{matrix}
\>  \> \> d_{v_i}, \; \> \quad \mbox{for}  \ \quad i=j, \\ 
- w_{i,j}, \quad \mbox{for} \  \quad i \neq j,
\end{matrix}
\end{equation*}
where $d_{v_i}$ is the degree of $v_i$, and $w_{i,j}$ is the weight of the edge connecting $v_i$ and $v_j$. 
\edf

The Laplacian $L(G)$ is self-adjoint, positive semi-definite, and diagonally dominant. In addition, the sum of any row (and also, any column) of $L$ is zero. Therefore $\lambda = 0$ is an eigenvalue of $L$, with corresponding eigenvector $\bone =(1,...,1)^{T}$.  Let us order the eigenvalues of $L(G)$ as follows: $0= \lambda_1 \le \lambda_2 \le ... \le \lambda_n$, and denote by $\varphi_1, \varphi_2, ..., \varphi_n$ the corresponding eigenvectors. We have already seen that $\varphi_1 = \alpha \bone$. We now consider $\lambda_2$ and $\varphi_2$. This eigenvalue and eigenvector pair has special significance and, for this reason, are given special names.

\bdf The algebraic connectivity of a graph $G$, denoted by $a(G)$, is defined to be the second smallest eigenvalue of the corresponding Laplacian matrix $L(G)$, with eigenvalues $0= \lambda_1 \le \lambda_2 \le ... \le \lambda_n$ and eigenvectors $\varphi_1, \varphi_2, ..., \varphi_n$. The eigenvector $\varphi_2$, corresponding to the eigenvalue $a(G)$, is called the Fiedler vector of $G$.  \edf

The term Fiedler vector comes from the mathematician Miroslav Fiedler, who proved many results regarding the significance of this eigenvector. His work involving irreducible matrices and the Fiedler vector can be found in \cite{F1,F2}.

We now introduce our cascadic MG (CMG) algorithm for computing the Fielder vector.  Our CMG algorithm is a purely algebraic approach, and the multilevel structure is constructed from the graph directly.  Therefore, similar to a standard algebraic MG (AMG) method, the new algorithm consists of three steps: a setup phase, a solving phase on the coarsest level, and a cascadic solving phase (also called refinement phase in our paper). The process works as follows:

\noindent 
{\bf Step 1}:  Coarsen our graph $G_0$ iteratively to coarse graphs $G_1,G_2,...,G_J$. 

\noindent
Taking inspiration from AMG coarsening and graph matching, we introduce a technique we call heavy edge coarsening (HEC). At each level $i$, for the graph $G_i$ with $n_i$ vertices, this coarsening procedure produces aggregates $G_i^m$, $m=1,2,\cdots,n_{i+1}$ and restriction matrix $ I_i^{i+1} \in \mathbb{R}^{n_{i+1} \times n_i}$ defined by
\[
(I_i^{i+1})_{pq} = 1, \quad \text{if} \ q \in G_i^p, \quad\mbox{and}\quad  
(I_i^{i+1})_{pq} = 0, \quad \text{if} \ q \notin G_i^p.
\]

The transpose of the restriction matrix is known as a prolongation.  The coarser graph $G_{i+1}$ is defined by designating the aggregates as the vertices of the coarse graph.  Two aggregates are connected on this coarse graph if and only if there is an edge from $G_i$ connecting a vertex from one aggregate and with a vertex from the other aggregate. This creates a multilevel structure of coarse Laplacians $L^0,L^1,....,L^J$ where $L^{i+1} = I_i^{i+1}L^i (I_i^{i+1})^T$. In general, the choice of aggregates in the coarsening phase of a multilevel algorithm of this form tends to be the most expensive part of the procedure.

\noindent 
{\bf Step 2}:  Solve for the Fiedler vector on the coarse graph $G_J$. 

\noindent
{\bf Step 3}: For  $j=J$ to $j=1$ we prolongate the Fiedler vector from the coarse graph $G_j$ to the finer graph  $G_{j-1}$ and use the prolongated vector as an initial guess for a simple iterative procedure (such as power iteration). Such steps we call a ``refinement'' (or smoothing). We note that since we aim to approximate the Fiedler vector, we need to keep the iterates orthogonal to the constant vector.

\begin{Remark}
In practice the coarse graph tends to be small in size (usually $\vert V \vert < 100$). The technique implemented on this level is not extremely relevant for single computations of the vector. However, for applications which may require this eigenvalue computation a large number of times (such as recursive spectral bisection, for large $k$), this becomes more of a relevant issue. The commonly used eigensolver on this coarse level, in the absence of a good intial guess, is the Lanczos algorithm. However, in our implementation we over-coarsen to $\vert V \vert < 25$ and use power iteration on a random vector, sampled from a Gaussian distribution. 
\end{Remark}

Traditionally, in the MG method literature, \textbf{Step 2} and \textbf{3} together are called the CMG method. However, in non-spectral methods for graph partitioning, this is not the case, and for this reason we maintain the three-step structure that is prevalent in the literature.  We present the core of our cascadic eigensolver in Algorithm~\ref{alg:mfe}. 

\begin{algorithm}
\caption{Multilevel Cascadic Eigensolver}
\label{alg:mfe}
\begin{algorithmic}[1]
\STATE { \bf Input: } graph Laplacian matrix $L^0 \in \R^{n_0 \times n_0}$ \\
\STATE { \bf Output: } approximate Fiedler vector $\tilde y^{(0)}$ \\ 
\medskip
\STATE {\bf Step 1: Setup Phase} \\
\STATE set $i=0$ \\
\WHILE{$n_i>25$}
\STATE $I_i^{i+1} \gets \text{HEC}(L^i)$ \\
\STATE $L^{i+1}= I_i^{i+1} L^i  (I_i^{i+1} )^T$ \\
\STATE $i=i+1$
\ENDWHILE
\STATE $J \gets i$ \\ 
\medskip
\STATE {\bf Step 2: Coarsest Level Solving Phase} \\
\STATE $\tilde y^{(J)} \gets \text{PI}(L^J,randn(n_J))$ \\ 
\medskip
\STATE {\bf Step 3: Cascadic Refinement Phase} \\
\FOR{$j=J-1$ \TO $0$}
\STATE $\hat y^{(j)}=  (I_i^{i+1} )^T \tilde y^{(j+1)}$ \\
\STATE $\tilde y^{(j)} \gets \text{PI}$($L^j$,$\hat y^{(j)}$)
\ENDFOR
\end{algorithmic}
\end{algorithm}

Here, the subroutine HEC and PI are presented later in Algorithm \ref{alg:hec} and \ref{alg:pi}, respectively. As mentioned before, because the size of the coarsest graph is very small, and power iteration is efficient, our focus is on the first and third steps.  We will first introduce the heavy edge coarsening scheme we proposed for the setup phase, and then present our cascadic refinement scheme.

\subsection{Heavy Edge Coarsening} \label{sec:HEC}
We now consider the coarsening algorithm used for the setup phase.  The goal for this step is to coarsen a graph quickly, while also maintaining some semblance of its structure. In practice, the coarsening procedure tends to dominate the run time of the multilevel eigensolver. 
To increase the efficiency of a coarsening algorithm one needs to make compromises between fast (with respect to computational time) and optimal (with respect to better representations of the graph on coarser graphs) coarsening techniques.  We propose a new coarsening algorithm which combines ideas and algorithms described in the literature \cite{KCH,KK1,KK2} and balances between reducing computational time and providing coarse graphs with good quality. In order to introduce our coarsening algorithm, we begin by considering matching as a coarsening technique. The formal concept of a matching is as follows: 

\bdf
Let $G=(V,E)$. A matching is a subset $E^* \subset E$, such that no two elements of $E^*$ are incident on the same vertex. A matching $E^*$ is said to be a maximal matching if there does not exist an edge $e_{i,j} \in E \backslash E^*$ such that $E^* \cup \{ e_{i,j} \} $ is still a matching. 
\edf

For our purposes, the matching computed at each level is always a maximal matching. A matching is computed at each level, and the edges in the matching are collapsed to form the coarser graph. We consider the class of matching algorithms concerned with finding the matching with the heaviest edge weight.  A matching of heavy edges would make an ideal coarse graph for our multilevel eigenproblem. The reason for this is related to graph partitioning. This coarsening procedure creates a smaller edge cut on coarse levels for partitions, which results in smaller edge cuts for the finer graphs. Even though we are not refining partitions, this concept still applies, due to the close connection between the Fiedler vector and graph partitioning.  To do a matching of heavy edges optimally is rather expensive because it would require searching for the heaviest weighed edge incident to two unmatched vertices at each step. In practice, the vertices are usually visited in a random order, and the heaviest weighed incident edge with an unmatched vertex is chosen. Such a technique produces a less optimal partition, but is much faster.  We adopt a similar procedure in our coarsening algorithm.   

However, we choose to perform a more aggressive coarsening procedure, rather than matching, because it reduces the number of levels in the multilevel scheme. In addition, when considering using heavy edge schemes, an aggressive coarsening procedure (see Algorithm \ref{alg:hec}) 
is significantly easier to implement than its matching counterpart because we consider mapping each vertex to a vertex incident with it with heaviest edge, rather than picking the heaviest edge with an unmatched vertex.

We visit the vertices in a random order. At each vertex we visit, we check if it has been mapped to some aggregate. If the vertex is unmapped, we map the vertex to the aggregate containing the adjacent vertex with the heaviest connecting edge. If the vertex already belongs to an aggregate, we skip it and continue to the next vertex. We finish when all vertices have been visited and belong to some aggregate. In general, this will not result in a matching. We call this technique heavy edge coarsening (HEC) and for the specific details regarding its implementation, we refer to Algorithm \ref{alg:hec}.

\begin{Remark}
As an example, for a graph Laplacian corresponding to an anisotropic problem, one would end up with aggregates that contains vertices in lines pointing in the ``strong'' direction. This procedure would effectively only coarsen in the ``strong" direction initially.
\end{Remark}

The HEC procedure proves to be a fast and efficient means of coarsening. The structure of the finer graph is well represented, making the refinement process of power iteration converge quickly. In addition, one of the biggest benefits of HEC is the relatively small number of coarse levels required. We will introduce this concept, in the form of a lemma. 

\blm
Let $G_i=(V_i,E_i)$ be a connected graph with $n_i$ vertices. Let $n_{i+1}^{HEC}$ be the number of aggregates formed by heavy edge coarsening, and $n_{i+1}^{M}$ be the number of aggregates formed by matching. Define the coarsening rate $k^i_{HEC}= n^{HEC}_{i+1} /  n_i$ and $k^i_{M}= n^{M}_{i+1} / n_i$, respectively, Then we have $1/n_i \le k^i_{HEC} \le 0.5$ and $0.5 \le k^i_M \le 1$. 
\elm
\begin{proof} From the definition of a matching, we have that $n^{M}_{i+1}$ cannot be less than half of $n_i$. For the bounds on $k^i_{HEC}$, we note that for our HEC algorithm, every node in $V_i$ is mapped to another node, or has been mapped to, which implies that each aggregation has at least two vertices, i.e. the average $n_i/n_{i+1}^{HEC}$ is bigger than or equal to $2$.  Therefore, $n^{HEC}_{i+1}$ is at most half of $n_i$. The lower bound results from taking a HEC procedure on a graph $G_i$ such that $G_{i+1}$ is a single node. 
\end{proof}

\begin{algorithm}
\caption{Heavy Edge Coarsening (HEC)}
\label{alg:hec}
\begin{algorithmic}[1]
\STATE { \bf Input: } graph Laplacian matrix $L^i \in \R^{n_i \times n_i}$ \\
\STATE { \bf Output: } restriction matrix $I_i^{i+1}$  \\ 
\medskip
\STATE $c\gets0$\\
\STATE $p\gets randperm(n_i)$\\
\STATE $q\gets zeros(n_i,1)$\\
\FOR{$i=1$ \TO $n_i$}
\IF{$q(p(i))=0$}
\STATE $m \gets argmin(L(:,p(i)))$\\
\IF{$q(m)=0$}
\STATE $c \gets c+1$\\
\STATE $q(m)=c$\\
\STATE $q(p(i))=c$\\
\ELSE
\STATE $q(p(i))=q(m)$
\ENDIF
\ENDIF
\ENDFOR
\medskip
\STATE $I_i^{i+1} \gets zeros(c,n_i)$ \\
\FOR{$i=1$ \TO $n_i$}
\STATE $I_i^{i+1}(q(i),i)=1$
\ENDFOR
\end{algorithmic}
\end{algorithm}

We have given a bound for the value of $k_{HEC}$ (we drop the superscript $i$ for simplicity). Given below in Table \ref{tab:sample_k} are samples of what values $k_{HEC}$ takes in practice for different graphs. As expected, the values taken in practice are significantly below the given bound of $0.5$.

\begin{table}[ht]
\caption{Sample values of $k_{HEC}$} \label{tab:sample_k}
\begin{center}
\begin{tabular}{ l | c }
Graph & Sample $k^0_{HEC}$ Value \\ \hline \hline
144 & 0.1893 \\ 
598a & 0.2024 \\ 
auto & 0.1742  \\ 
\end{tabular}
\end{center}
\end{table}

What remains to be explored is the properties of the restiction matrix $I_i^{i+1}$. The most important fact that we require is that the coarse matrix created by the restriction matrix is still a Laplacian matrix of the coarse graph. In addition, we want to inspect whether or not the constant eigenvector $\bone = (1,...,1)^T$ is preserved under restrictions and prolongations. We also consider issues of orthogonal solutions with respect to the refinement procedure.  Those properties are summarized in the following proposition (see also~\cite[Theorem~3.6]{2003KimH_XuJ_ZikatanovL-aa} for such results). 
 
\begin{Proposition}
\label{prop}
Let $I_i^{i+1} \in \R^{n_{i+1} \times n_i}$ be a restriction matrix defined by HEC. Then we have the following:
\begin{enumerate}
\item $( I_i^{i+1})^T \bone^{i+1} = \bone^{i}$. That is, the eigenvector $\bone$ is preserved under refinement. 
\item If $L^{i}$ is a Laplacian matrix, then $L^{i+1} =  I_i^{i+1} L^i  (I_i^{i+1} )^T$ is also a Lapacian matrix. In particular, $L^{i+1} \bone^{i+1} = 0$.
\item Let $u \in \bone^{\perp} = \{ u \vert (u,\bone)=0 \} \subset \R^{n_i}$. Then  $I_i^{i+1} u \in \bone^{\perp} \subset \R^{n_{i+1}}$. However, in general, $(I_{i-1}^{i})^T u \notin \bone^{\perp} \subset \R^{n_{i-1}}$. 
\end{enumerate}
\end{Proposition}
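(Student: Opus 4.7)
The core observation is that the restriction matrix $I_i^{i+1}$ has a very simple sparsity pattern: since each fine vertex $q$ belongs to exactly one aggregate $G_i^p$, every column of $I_i^{i+1}$ contains exactly one nonzero entry (equal to $1$), and every row of $I_i^{i+1}$ is the indicator vector of an aggregate. All three parts reduce to direct matrix manipulations built on this fact.

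For part (1), I would compute the $q$-th entry of $(I_i^{i+1})^T \bone^{i+1}$ as $\sum_p (I_i^{i+1})_{pq}$. Since vertex $q$ lies in precisely one aggregate $p$, exactly one summand is $1$ and the rest are $0$, yielding the constant vector $\bone^i$. Part (2) then follows almost immediately by assembling the pieces. Symmetry of $L^{i+1}$ is inherited from $L^i$. The identity $L^{i+1}\bone^{i+1} = 0$ is obtained by applying part (1): $L^{i+1}\bone^{i+1} = I_i^{i+1} L^i (I_i^{i+1})^T\bone^{i+1} = I_i^{i+1} L^i \bone^i = 0$. To verify that off-diagonal entries are non-positive, I would expand
\[
(L^{i+1})_{pq} = \sum_{k \in G_i^p} \sum_{l \in G_i^q} L^i_{kl},
\]
and note that for $p \neq q$ the aggregates $G_i^p$ and $G_i^q$ are disjoint, so every summand is an off-diagonal entry of $L^i$, hence non-positive. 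Combined with zero row sums, non-positivity of off-diagonals forces non-negative diagonals, and the matrix is a graph Laplacian.

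For the first half of part (3), I would use adjointness: $(I_i^{i+1} u, \bone^{i+1}) = (u, (I_i^{i+1})^T \bone^{i+1}) = (u, \bone^i) = 0$ by part (1). For the second half, the point is that the dual relation fails because $I_{i-1}^i \bone^{i-1}$ is not generally a scalar multiple of $\bone^i$: its $p$-th entry is the aggregate size $|G_{i-1}^p|$, and these sizes can vary across $p$. Concretely, I would exhibit a small example (say, one aggregate of size $2$ and another of size $3$) and pick $u \in \bone^\perp \subset \R^{n_i}$ with $(u, I_{i-1}^i \bone^{i-1}) \neq 0$; then by adjointness $((I_{i-1}^i)^T u, \bone^{i-1}) \neq 0$.

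There is no real obstacle here, since every assertion is a one-line computation once the column/row structure of $I_i^{i+1}$ is recorded. The step requiring the most care is simply articulating the counterexample in part (3), making explicit that $I_{i-1}^i \bone^{i-1}$ is the vector of aggregate sizes and therefore fails to lie in $\mathrm{span}(\bone^i)$ whenever the HEC aggregates are not all equal in size, which is the generic situation.
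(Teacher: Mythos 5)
Your proposal is correct and follows the same overall structure as the paper's proof: part (1) from the fact that each fine vertex lies in exactly one aggregate, part (2) by assembling symmetry, sign conditions, and the computation $L^{i+1}\bone^{i+1}=I_i^{i+1}L^i(I_i^{i+1})^T\bone^{i+1}=I_i^{i+1}L^i\bone^i=0$, and part (3) by adjointness. You differ in two details, both to your advantage. For the off-diagonal sign condition in part (2), the paper splits $L^i=D^i-A^i$ and asserts that $I_i^{i+1}D^i(I_i^{i+1})^T$ is "still a degree matrix" and $I_i^{i+1}A^i(I_i^{i+1})^T$ "an adjacency matrix," which is loose (the first is merely diagonal, and the second picks up diagonal contributions from edges internal to an aggregate); your direct expansion $(L^{i+1})_{pq}=\sum_{k\in G_i^p}\sum_{l\in G_i^q}L^i_{kl}$ with disjoint aggregates for $p\neq q$ settles the sign cleanly without that detour. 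For the negative claim in part (3), the paper writes $(u,I_{i-1}^i\bone^{i-1})\neq(u,\bone^i)$ "since $I_{i-1}^i\bone^{i-1}\neq\bone^i$," which does not follow for an arbitrary $u$; you correctly identify that $I_{i-1}^i\bone^{i-1}$ is the vector of aggregate sizes and that one must choose $u\in\bone^\perp$ not orthogonal to it, which is possible precisely when the aggregate sizes are not all equal --- consistent with the proposition's "in general" qualifier.
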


\begin{proof} We begin with (1). This follows from the fact that each vertex in $V_i$ is mapped to only one vertex in $V_{i+1}$.  However, $ I_i^{i+1} \bone^{i} \ne \bone^{i+1}$ . This is expected, as the number of vertices in $V_i$ mapped to a given vertex $v_j \in V_{i+1}$ varies. 

To prove (2), we need to show that $L^{i+1}$ is still symmetric, with positive diagonal and non-positive offdiagonal, with $L^{i+1} \bone^{i+1} = 0$. We begin by decomposing $L^i$ into its degree matrix $D^i$ and adjacency matrix $A^{i}$. This gives us $L^{i+1} =  I_i^{i+1} D^i  (I_i^{i+1} )^T -  I_i^{i+1} A^i  (I_i^{i+1} )^T$. $ I_i^{i+1} D^i  (I_i^{i+1} )^T $ is still a degree matrix, and $I_i^{i+1} A^i  (I_i^{i+1} )^T$ an adjacency matrix. We show that $L^{i+1}$ is a Laplacian by taking $L^{i+1} \bone^{i+1} = I_i^{i+1} L^i  (I_i^{i+1} )^T \bone^{i+1} = I_i^{i+1} L^i \bone^i = 0$. 

Part (3) of the Proposition can be shown as follows. Let $u \in \bone^{\perp} \subset \R^{n_i}$. We have $(I_i^{i+1} u, \bone^{i+1})=(u,(I_i^{i+1})^T \bone^{i+1})=(u,\bone^i)=0$. Therefore, $I_i^{i+1} u \in \bone^{\perp} \subset \R^{n_{i+1}}$. Looking at $(I_{i-1}^{i})^T u$, we see $ ( (I_{i-1}^{i})^T u, \bone^{i-1} ) = (u, I_{i-1}^{i} \bone^{i-1} ) \ne (u, \bone^{i} ) = 0$, since  $I_{i-1}^{i} \bone^{i-1} \ne \bone^{i}$. 
\end{proof}

\subsection{Refinement (Smoothing) Strategies} \label{s:pi}
Given an approximate Fiedler vector $y^{(i+1)}$ on a coarse graph $G_{i+1}$, we aim to find an optimal manner to project this vector back to the finer graph $G_i$ and refine it to an approximate Fiedler vector $y^{(i)}$ on $G_i$. We begin by considering the projection problem. The most natural way to project $y^{(i+1)}$ to $G_i$ is to use the restriction matrix $I_i^{i+1}$ obtained from coarsening, define our prolongation matrix to be $(I_i^{i+1})^T$, and let the initial approximation be $\tilde y^{(i)} = (I_i^{i+1})^T y^{(i+1)}$. However, we have to concern ourselves with orthogonality to the eigenvector $\bone$. From Proposition \ref{prop}, we have that  $( \tilde y^{(i)}, \bone^{i} ) \ne 0$. Therefore, before we can perform any sort of eigenvalue refinement procedure, we require our inital vector to be in the subspace $\bone^{\perp} =   \{ u \vert (u,\bone)=0 \}$. This can be accomplished by one iteration of Gram-Schmidt. From here, the orthogonality will be approximately maintained, since $\bone ^{\perp}$ is $L$-invariant. 

Given an approximation $\tilde y^{(i)}$, we can refine it in a number of ways. We consider power iteration as a refinement scheme in our CMG algorithm because of its simplicity. In this way we take advantage of the sparsity of our Laplacian.

Because the Fiedler vector corresponds to the second smallest eigenvalue of the graph Laplacian, we cannot apply the power iteration directly.  Therefore, we compute a Gershgorin bound on the eigenvalues of a Laplacian $L$ by considering $g=\|L\|_{\ell^1}$. From the Gershgorin circle Theorem and properties of the Laplacian, we have that all the eigenvalues of $gI-L$ are positive, with eigenvalues $g- \lambda_1, g- \lambda_2,...,g- \lambda_n$. The eigenvectors obviously remain unchanged. In this way it suffices to perform power iteration on $gI-L$, coupled with an intial orthogonalization to $\bone$. We note that $\bone ^{\perp}$ is also invariant under $gI-L$. This variant of power iteration is detailed in Algorithm \ref{alg:pi}. 

We proceed by examining the convergence for power iteration. Let $u^0$ denote our initial guess, and $u^k$ represent the normalized vector resulting from $k$ iterations. For our algorithm, the stopping criterion is given by $(u^k,u^{k-1}) >1-\delta$, for some given tolerance $\delta$. We note that this is equivalent to $||u^k - u^{k-1}||^2<2\delta$. We recall the following result, with respect to power iteration on an arbitrary symmetric matrix.

\bthm \label{powerit}
Let $A$ be a symmetric matrix with eigenvalues $\lambda_1 > \lambda_2 \ge ... \ge \lambda_n \ge0$ and corresponding eigenvectors $\varphi_1,\varphi_2,...,\varphi_n$. Then power iteration, with intial guess $u^0$, $(u^0,\varphi_1) \ne 0$, has convergence rate given by
\begin{equation*}
\sin \angle (u^k,\varphi_1)< \big| \frac{\lambda_2} {\lambda_1} \big|^k \tan \angle (u^0,\varphi_1),
\end{equation*}
where $\angle (u, v)$ is the angle between the subspaces spanned by $u$ and $v$.
\ethm

A proof of this result can be found in \cite{GV}. We see that the number of iterations required depends on the eigenvalue gap, the quality of the initial guess in our multilevel structure, as well as the chosen tolerance. For general graphs it is hard to obtain better estimates for the power iteration portion of the cascadic algorithm. This stems mainly from the fact that the eigenvalues of a general graph does not follow any set spacing or structure, and that our aggregation procedure is random in nature, making an estimate of the quality of the initial approximation extremely tough in practice. This limits our ability to give rigorous theoretical results for our algorithm in general. In Section~\ref{sec:GCMG} we will give results for our cascadic eigenvalue algorithm for the case of graphs resulting from elliptic PDE discretizations, with geometric coarsening as the cascadic coarsening procedure and a fixed number of power iteration steps at each level. These simplifications remove the barriers that we currently face for analysis. However, we will give numerical justification that these results are robust to general graphs with HEC as the coarsening procedure.

\begin{algorithm}
\caption{Power Iteration (PI)}
\label{alg:pi}
\begin{algorithmic}[1]
\STATE { \bf Input: } graph Laplacian matrix $L \in \R^{n \times n}$, initial guess $\tilde y^0$ \\
\STATE { \bf Output: } approximate Fiedler vector $\tilde y$ \\ 
\medskip
\STATE $g= max_{i} \sum_{1 \le j \le n} \vert l_{i,j} \vert$\\
\STATE $B_g=gI-L$\\
\STATE $u=\tilde y^0 - \frac {\bone^T \tilde y^0} {n} \tilde y^0$\\
\STATE $u$ $\leftarrow$ $\frac{u} {\| u \|}$\\
\STATE $v$ $\leftarrow$ $zeros(n,1)$\\
\WHILE{$u^T v < 1 - tol$}
\STATE $v$ $\leftarrow$ $u$\\
\STATE $u=B_g v$\\
\STATE $u$ $\leftarrow$ $\frac{u} {\| u \|}$
\ENDWHILE
\STATE $\tilde y = u$ \\
\end{algorithmic}
\end{algorithm}

\section{Convergence Analysis of CMG for Elliptic Eigenvalue Problems} \label{sec:GCMG}
In Section \ref{sec:CMG-Fiedler}, we introduced the CMG method for computing the Fiedler vector of a graph Laplacian.  However, we used a purely algebraic coarsening strategy (see Section \ref{sec:HEC}) to construct the hierarchical structure; hence, similar to the AMG method for the Poisson problem, the convergence analysis for a purely algebraic CMG method is difficult. In order to illustrate and theoretically justify the convergence of the proposed CMG method, we discuss the geometric CMG (GCMG) method for the elliptic eigenvalue problem.  
As a model which shares a great deal of properties with the graph Laplacian eigenproblem, we consider the following elliptic eigenvalue problem with Neumann boundary conditions, 
\begin{equation} \label{lapl}
        - \Delta \varphi = \lambda \varphi, \qquad \text{on} \ \Omega, \qquad
\frac { \partial \varphi} { \partial n} = 0, \qquad \text{on} \  \partial \Omega 
  \end{equation}
where $\Omega \in \R^d$ is a polygonal Lipschitz domain.  We only consider the two- and three- dimensional case to illustrate the theoretical bounds that can be obtained for the cascadic multilevel algorithm. However, the GCMG method we discussed here can be naturally applied for higher dimentional cases.   Using the standard Sobolev space $H^1(\Omega)$, we consider the weak formulation of \eqref{lapl} as follows: find $(\lambda, \varphi) \in \R \times H^1(\Omega)$ such that
\begin{equation}\label{eqn:Neumann-weak}
a(\varphi,v) = \lambda(\varphi,v), \quad \forall \ v \in H^1(\Omega),
\end{equation}
where the bilinear form $a(u, v) = (\nabla u, \nabla v)$, and $(\cdot, \cdot)$ is the standard $L^2$ inner product.  Here, the bounded symmetric bilinear form $a(\cdot,\cdot)$ is coercive on the quotient space $H^1(\Omega)$, and, therefore, induces an energy-norm as follows:
\begin{equation}\label{def:a-norm}
\| u \|_a^2 = a(u,u), \quad \forall \ u \in H^1(\Omega)\backslash \R.
\end{equation}
Moreover, we denote the $L^2$-norm by $\| \cdot \|$ as usual.  Similar to the eigenvalues for the graph Laplacian, $\lambda = 0$ is also an eigenvalue of the eigenvalue problem \eqref{eqn:Neumann-weak}, We can order the eigenvalues as follows: $0= \lambda^{(1)} \le \lambda^{(2)} \le ...  $ and denote by $\varphi^{(1)}, \varphi^{(2)}, ... $ the corresponding eigenfunctions.  Again, we are interested in approximating the second smallest eigenvalue of \eqref{eqn:Neumann-weak} and its corresponding eigenfunction space. 

Given a nested family of quasi-uniform triangulations $\{  \Gamma_j \}_{j=0}^J$, namely, $$\frac{1}{c}2^{j-J} \leq h_j = \max_{T \in \Gamma_j} \text{diam}(T) \leq c 2^{j-J},$$
the spaces of linear finite elements are
\begin{equation*}
V_j = \{u \in C(\Omega): u|_T \in P_1(T), \forall \ T \in \Gamma_j \},
\end{equation*}
where $P_1(T)$ denotes the linear functions on the triangle $T$.  We have
\begin{equation*}
V_J \subset V_{J-1} \subset \cdots \subset V_0 \subset H^1(\Omega).
\end{equation*}
The finite element approximations of \eqref{eqn:Neumann-weak} on each level are as follows: find $(\lambda_j, \varphi_j) \in \R \times V_j$ such that 
\begin{equation}\label{eqn:fem-eigen}
a(\varphi_j, v_j) = \lambda_j (\varphi_j, v_j), \quad \forall \ v_j \in V_j.
\end{equation}
We can order the eigenvalues as follows: $0= \lambda_j^{(1)} \le \lambda_j^{(2)} \le \cdots \le \lambda_j^{(N_j)} $ and denote by $\varphi^{(1)}, \varphi^{(2)}, ... \varphi^{(N_j)} $ the corresponding eigenfunctions.   Again, we are interested in approximating the second smallest eigenpair on the finest level.  Moreover, we can define an operator $A_j$ by $a(u_j,v_j) = (A_j u_j, v_j)$, $\forall u_j, v_j \in V_j$. 

We assume the elliptic eigenvalue problem has $H^{1+\alpha}$-regularity, i.e., the eigenvalue function $\varphi \in H^{1+\alpha}$ for some $0 < \alpha \leq 1$.  Then we have the following error estimates regarding the standard finite element approximation of the elliptic eigenvalue problem, taken from the work of Babu\v{s}ka and Osborn~\cite{BO}.

\blm \label{ellip}
Assume that $(\lambda_h, \varphi_h) \in (\R \times V_h)$ is a finite element approximation of \eqref{eqn:Neumann-weak}. Then we have 
\begin{enumerate}[(i)]
\item $ | \lambda -\lambda_h | \leq C h^{2 \alpha}$,
\item there exists an eigenfunction $\varphi$ corresponding to $\lambda$, such that
\begin{equation}\label{ine:error-eigenfuction}
\| \varphi - \varphi_h \|_a \leq C h^{\alpha}, 
\end{equation}
\end{enumerate}
where $C$ is a constant that does not depend on the mesh size.
\elm

Now we introduce the Ritz projection on level $j$ by $a(P_{j} u, v_{j}) = a(u, v_{j})$, $\forall v_{j} \in V_{j}$.  We assume the eigenvalue $\lambda^{(l)}$ we want to approximate has multiplicity $k$, i.e. $\lambda^{(l)} = \lambda^{(l+1)} = \cdots = \lambda^{(l+k-1)}$ and there are $k$ corresponding eigenfunctions $\varphi^{(l)}, \varphi^{(l+1)}, \varphi^{(l+k-1)}$,  then on level $j$, there are $k$ approximate eigenpairs $(\lambda_j^{(l+i)}, \varphi_j^{(l+i)})$, $i=0, \cdots k-1$, such that $\lambda_j^{(l)} \le \lambda_j^{(l+1)} \le \cdots \le \lambda_j^{(l+k-1)}$.  Let $Q_j$ denote the $L^2$-projection onto $\text{span} \{  \varphi_j^{(l)},  \varphi_j^{(l+1)}, \cdots,  \varphi_j^{(l+k-1)} \}$ and define $\Lambda_j := Q_j \circ P_j$.  Then for an eigenfunction $\varphi^{(l+i)}$, $i=0, 1, \cdots, k-1$, $\Lambda_j \varphi^{(l+i)} \in \text{span} \{  \varphi_j^{(l)},  \varphi_j^{(l+1)}, \cdots,  \varphi_j^{(l+k-1)} \}$ is regarded as its approximation.  The following best-approximation result of $\Lambda_j \varphi^{(l+i)}$ can be found in \cite{Gallistl.D.2014a}.  For the simplicity of the presentation, we omit the superscript $(l+i)$.

\blm \label{lem:ba}
Assume that $h_j$ is sufficiently small and the elliptic eigenvalue problem has $H^{1+\alpha}$-regularity, then for any eigenpair $(\lambda, \varphi)$ with $\| \varphi \|= 1 $, we have
\begin{equation}\label{ine:ba}
\| \varphi - \Lambda_j \varphi \|_a \leq C h_j^{\alpha},
\end{equation}
where $C$ is a constant that does not depend on the mesh size.
\elm

Next, we will present several results related to the approximation property of finite element approximate eigenfunctions between two successive levels $j$ and $j+1$.  For the sake of simplicity, we will use script $h$ to denote level $j$ and script $H$ to denote level $j+1$.  Moreover, we denote the mesh size $h_j$ by $h$ and $h_{j+1}$ by $H$.  Considering the eigenvalue problem on level $j+1$ as a finite element approximation of the eigenvalue problem on level $j$, we have the following lemma regarding the approximation in the energy norm.

\blm \label{lem:discrete-ba} Let $\{ (\lambda_h^{(l+i)}, \varphi_h^{(l+i)})\}_{i=0}^{i=k-1}$ and $\{ (\lambda_H^{(l+i)}, \varphi_H^{(l+i)}) \}_{i=0}^{i=k-1}$ be approximate eigenpairs of the eigenvalue $\lambda^{(l)}$  with multiplicity $k$.  For sufficiently small $H$ and   for any $w_h \in \text{span}\{ \varphi_h^{(l)}, \varphi_h^{(l+1)}, \cdots, \varphi_h^{(l+k-1)}\}$ 
we have 
\begin{equation}\label{ine:discrete-ba} \| w_h - \Lambda_Hw_h \|_a \leq C H^{\alpha}, 
\end{equation} 
\elm 
\begin{proof} 
Setting $w_h = \sum_{i=0}^{k-1} \beta_i \varphi_h^{(l+i)}$ we have, 
\begin{equation*} 
\| w_h - \Lambda_H w_h \|_a = \| \sum_{i=0}^{k-1} \beta_i \left( \varphi_h^{(l+i)} - \Lambda_H \varphi_h^{(l+i)} \right) \|_a \leq \sum_{i=0}^{k-1} |\beta_i| \| \left( \varphi_h^{(l+i)} - \Lambda_H \varphi_h^{(l+i)} \right) \|_a \leq C H^{\alpha}.  
\end{equation*} 
This completes the proof.  
\end{proof}

The next lemma provides an estimate on the error of approximation 
$(w_h-\Lambda_H w_h)$ in the $L^2$ norm.  In the proof, we use a separation bound 
given in Boffi~\cite{BOF}, namely, that for sufficiently small $H$ the following estimate holds:
\begin{equation}\label{def:seperation} 
\frac{ | \lambda_h^{(l)} |}{ | \lambda_h^{(l)} - \lambda_{H}^{(i)} |} \le d_l < \infty, \quad \text{for all } i \ne l,l+1,...,l+k-1. 
 \end{equation}

The $L^2$ estimate is then as follows. 
\blm \label{lem:l2-error}
Let $\{ (\lambda_h^{(l+i)}, \varphi_h^{(l+i)})\}_{i=0}^{i=k-1}$ and $\{ (\lambda_H^{(l+i)}, \varphi_H^{(l+i)}) \}_{i=0}^{i=k-1}$ be approximate eigenpairs of the eigenvalue $\lambda^{(l)}$ with multiplicity $k$.  For sufficiently small $H$ and for any $w_h \in \text{span}\{ \varphi_h^{(l)}, \varphi_h^{(l+1)}, \cdots, \varphi_h^{(l+k-1)}\}$, we have
\begin{equation}\label{ine:l2-error}
\|(I-\Lambda_H) w_h \| \leq C \|(I - P_H) w_h \|,
\end{equation}
where $C$ is a constant that does not depend on the mesh size.
\elm

\begin{proof}
  Let us first set $w_h = \sum_{j=l}^{l+k-1} \beta_j \varphi_h^{(j)}$.  Because $P_H w_h \in V_H$, we have $P_H w_h = \sum_{i=1}^{N_H} \alpha_i \varphi_H^{(i)}$ where $\alpha_i = (P_Hw_h, \varphi_H^{(i)})$.  Since by the definition of $Q_H$ we have that 
$Q_H\varphi_H^{(l+i)}=\varphi_H^{(l+i)}$ for $i=0,\ldots,k-1$, it is straightforward to calculate that 
\begin{equation*}
P_H w_h - \Lambda_H w_h = \sum_{i \neq l, l+1, \cdots, l+k-1} \alpha_i \varphi_H^{(i)}.
\end{equation*}
Next, using the relation
  \begin{equation*}
\lambda_H^{(i)} (P_H \varphi_h^{(j)}, \varphi_H^{(i)}) = a(P_H \varphi_h^{(j)}, \varphi_H^{(i)}) = a(\varphi_h^{(j)}, \varphi_H^{(i)}) = \lambda_h^{(j)} (\varphi_h^{(j)}, \varphi_H^{(i)}),
\end{equation*}
we obtain that
\begin{equation*}
(\lambda_H^{(i)} - \lambda_h^{(j)}) (P_H \varphi_h^{(j)}, \varphi_H^{(i)}) = \lambda_h^{(j)} (\varphi_h^{(j)} - P_H \varphi_h^{(j)}, \varphi_H^{(i)}).
\end{equation*}
Therefore,
\begin{align*}
\| P_H w_h - \Lambda_H w_h\|^2 & = (P_H w_h, P_Hw_h - \Lambda_H w_h) = \sum_{i\neq l, l+1, \cdots, l+k-1} (P_H w_h, \varphi_H^{(i)})^2 \\
& = \sum_{i\neq l, l+1, \cdots, l+k-1} \left( \sum_{j=l}^{l+k-1} \beta_j (P_H \varphi_h^{(j)}, \varphi_H^{(i)}) \right)^2 \\
&= \sum_{i\neq l, l+1, \cdots, l+k-1} \left( \sum_{j=l}^{l+k-1} \beta_j \frac{\lambda_h^{(j)}}{\lambda_H^{(i)} - \lambda_h^{(j)}} ( \varphi_h^{(j)} - P_H \varphi_h^{(j)}, \varphi_H^{(i)}) \right)^2 \\
&\leq d_l ^2 \left(   \sum_{i\neq l, l+1, \cdots, l+k-1}  (w_h - P_H w_h,  \varphi_H^{(i)})^2   \right) \\
& = d_j^2 \| w_h - P_H w_h \|^2.
\end{align*}
And we have
\begin{equation*}
\| w_h - \Lambda_H w_h \| \leq \| w_h - P_H w_h\| + \| P_H w_h - \Lambda_H w_h \| \leq (1 + d_l) \| (I - P_H)w_h \|,
\end{equation*}
which leads to \eqref{ine:l2-error} with $C = 1 + d_l$.
\end{proof}

Based on Lemma \ref{lem:l2-error} and the interpolation argument \cite{Berg.J;Lofstrom.J.1976a}, we have the following \emph{approximation property} for the eigenvalue problem.
\blm \label{approxprop}
Let $\{ (\lambda_h^{(l+i)}, \varphi_h^{(l+i)})\}_{i=0}^{i=k-1}$ and $\{ (\lambda_H^{(l+i)}, \varphi_H^{(l+i)}) \}_{i=0}^{i=k-1}$ be approximate eigenpairs of the eigenvalue $\lambda^{(l)}$ with multiplicity $k$.  Assuming that $H$ is sufficiently small,  for any $w_h \in \text{span}\{ \varphi_h^{(l)}, \varphi_h^{(l+1)}, \cdots, \varphi_h^{(l+k-1)}\}$, we have
\begin{equation}
\|(I- \Lambda_H) w_h \|_{H^{1-\alpha}} \leq C H^{\alpha} \| (I - \Lambda_H) w_h \|_a
\end{equation}
where $C$ is a constant independent of the mesh size.
\elm

\begin{proof}
From Lemma \ref{lem:l2-error}, we have
\begin{align*}
\| (I-  \Lambda_H) w_{h} \| &\le  
C \| (I - P_{H} ) w_h \|  = 
C \| (I-P_H)[(I - P_{H} ) w_h] \| \\
  &\le  C H \| (I - P_{H} ) w_h\|_{a} \le  C H \| (I -  \Lambda_H)w_{h} \|_{a},
\end{align*}
where the last inequality follows from noting that $ \| (I - P_{H} ) w_h \|_{a} = \inf_{v \in V_{H} } \|w_h - v \|_a$. By an interpolation argument, the desired result follows.
\end{proof}

Based on the nested spaces $V_J \subset V_{J-1} \subset  \cdots \subset V_0$, the GCMG method for eigenvalue problems seeks to solve the eigenvalue problem exactly on the coarse grid $V_J$, and interpolate and smooth the approximation back to the fine grid $V_0$.  In this section, we consider the GCMG method, and therefore, the geometric prolongation and restriction are used in our algorithm, and will be omitted as usual. Our cascadic Algorithm \ref{alg:mfe} can be framed as follows: 

\begin{algorithm}
\caption{Geometric Cascadic Multigrid Method for Elliptic Eigenvalue Problem}
\label{alg:casc}
\begin{algorithmic}[1]
\IF{$j=J$ (coarsest level)}
\STATE solve $a(\varphi_J, v_J) = \lambda (\varphi_J, v_J)$ exactly, and let $u_J := \varphi_J^{(l)}$
\ELSE
\STATE $u_{j}=( I - \omega_j A_{j})^{k_j}  u_{j+1}$, where $\omega_j = \| A_j \|^{-1}_{\infty}$.    (with appropriate scaling)  \\
\STATE $\lambda_j = \frac{a(u_j, u_j)}{(u_j, u_j)}$
\ENDIF
\end{algorithmic}
\end{algorithm}

\begin{Remark}
We present the algorithm for just computing one approximate eigenpair.  However, we can easily extend the algorithm to compute several approximate eigenpairs by starting with $k$ approximate eigenpairs on the coarest level and then, on each level, after smoothing each approximate eigenfunction, we can orthogonalize them and compute corresponding Rayleigh quotients.  
\end{Remark}

This procedure is only performed once and results in the approximation $u_0 \in V_0$. Next, we consider the uniform convergence of the proposed GCMG method (Algorithm \ref{alg:casc}).  Our analysis will follow the standard convergence analysis for the CMG method for elliptic partial differential equations. We will first present a two-level error estimate on two successive levels $j+1$ and $j$, and then generalize it to the multilevel case later.  Again, we use $h$ to denote $j+1$ and $H$ to denote $j$ for the sake of simplicity.  We begin by recalling the following lemma.
\blm
\label{calc}
For any $k \in \Z_+$, we have $\max_{t \in [0,1]} t(1-t)^k < \frac{1} {k+1}$. 
\elm

This is a simple result, and is used often in multigrid literature. Denoting by 
$S_h = I - \omega_h A_h$ the error propagation operator associated with the Richardson smoother, we have the following \emph{smoothing property}.

\blm
\label{richresult}
Let $\omega = ||A_h||^{-1}_\infty$ and $k$ be the number of smoothing steps. Then the following estimate holds
\begin{equation} \label{ine:smoothing-prop}
\| S_h^{k} v_h \|_a  \leq C  \frac{h^{-\alpha}}{k^{\alpha/2}}  \| v_h \|_{H^{1-\alpha}},   \quad \forall \ v_h \in V_h \backslash \R.
\end{equation}
\elm
\begin{proof}  
Recall that, by the properties of a graph Laplacian,  $A_h$ is Hermitian 
and positive semi-definite, and, moreover,  $A_h$ is positive definite 
on the subspace $\{ u\; | \; (u, \bone) =0 \}$. Hence, 
\begin{eqnarray*}
|| S_h^\nu u ||^2_a &=& \big( (I - \omega A_h)^\nu u , (I - \omega A_h)^\nu u \big)_a \\
&=& \big(  A_h (I - \omega A_h)^\nu u , (I - \omega A_h)^\nu u \big) \\
&=& \omega^{-1} \big(\omega A_h (I - \omega A_h)^{2 \nu} u , u \big).
\end{eqnarray*}
Noting that the spectral radius $\rho (\omega A_h) \le 1 $, $\omega^{-1} \eqsim h^{-2}$ and making use of Lemma \ref{calc}, we obtain
$$ || S_h^\nu u ||^2_a \lesssim h^{-2} \eta_0 (2 \nu) ||u||^2. $$
This gives us
$$
\| S_h^{k} v_h \|_a \leq C \frac{h^{-1}}{k^{1/2}} \| v_h \|,   
\quad \forall \ v_h \in V_h \backslash \R. 
$$
Recalling that $S_h$ is a contraction, and, hence, 
$\| S_h^{k} v_h \|_a \leq C \| v_h \|_a$, for all $v_h \in V_h$, the desired result follows by an interpolation argument. 
\end{proof}

We are now able to show the uniform convergence of our GCMG Algorithm \ref{alg:casc} under suitable conditions.

\blm \label{lem:two-level-convergence}
Let $\{ (\lambda_h^{(l+i)}, \varphi_h^{(l+i)})\}_{i=0}^{i=k-1}$ and $\{ (\lambda_H^{(l+i)}, \varphi_H^{(l+i)}) \}_{i=0}^{i=k-1}$ be approximate eigenpairs of the eigenvalue $\lambda^{(l)}$ with multiplicity $k$ and $u_h$ be computed by Algorithm \ref{alg:casc}.  Assuming that $H$ is sufficiently small,  there exist $\varphi^h \in \text{span}\{ \varphi_h^{(l)}, \varphi_h^{(l+1)}, \cdots, \varphi_h^{(l+k-1)}\}$ and $\varphi^H \in  \text{span} \{  \varphi_H^{(l)}, \varphi_H^{(l+1)}, \cdots, \varphi_H^{(l+k-1)} \}$ such that the error of the two-level GCMG Algorithm \ref{alg:casc} with the Richardson smoother for the eigenvector can be estimated by
\begin{equation} \label{ine:two-level-convergence}
\|  u_h - \varphi^h \|_a \leq C \frac{h^\alpha}{k^{\alpha/2}}  + \|  u_{H} - \varphi^H \|_a,
\end{equation}
where $k$ is the number of smoothing steps and $C$ is a constant that does not depends on mesh size.
\elm

\begin{proof}
Denote $e_H = u_H - \varphi^H$, we have $u_H = \varphi^H + e_H$.  Let $\bar{\varphi}^h \in \text{span}\{ \varphi_h^{(l)}, \varphi_h^{(l+1)}, \cdots, \varphi_h^{(l+k-1)}\}$ satisfy $\varphi^H = \Lambda_H \bar{\varphi}^h $, then we have
\begin{equation*}
u_H = \bar{\varphi}^h + (\varphi^H - \bar{\varphi}^h) + e_H.
\end{equation*}
Let $\bar{\varphi}^h = \sum_{i=l}^{l+k-1} \beta_i \varphi_{h}^{(i)}$. We have
\begin{equation*}
u_h  =  S_h^k \bar{\varphi}^h + S_h^k(\varphi^H - \bar{\varphi}^h) + S_h^k e_H = \sum_{i=l}^{l+k-1} \beta_i  \left( \frac{\omega_h^{-1} - \lambda_h^{(i)}}{\omega_h^{-1}} \right)^k \varphi_h^{(i)} + S_h^k(\varphi^H - \bar{\varphi}^h) + S_h^k e_H.
\end{equation*}
Denote $\varphi^h :=  \sum_{i=l}^{l+k-1} \beta_i  \left( \frac{\omega_h^{-1} - \lambda_h^{(i)}}{\omega_h^{-1}} \right)^k \varphi_h^{(i)} \in  \text{span}\{ \varphi_h^{(l)}, \varphi_h^{(l+1)}, \cdots, \varphi_h^{(l+k-1)}\}$, we have
\begin{equation*}
e_h := u_h - \varphi^h = S_h^k(\varphi^H - \bar{\varphi}^h) + S_h^k e_H.
\end{equation*}
Therefore, 
\begin{eqnarray*}
\| e_h \|_a & \leq &\| S_h^k(\varphi^H - \bar{\varphi}^h) \|_a + \| S_h^k e_H \|_a \\
& \leq & C \frac{h^{-\alpha}}{k^{\alpha/2}}  \| \varphi^H - \bar{\varphi}^h \|_{H^{1-\alpha}} + \|e_H\|_{a} \qquad \text{(from Lemma~\ref{richresult})} \\
& \leq &C \frac{1}{k^{\alpha/2}} \| \Lambda_H \bar{\varphi}^h - \bar{\varphi}^h \|_a + \|e_H\|_a \qquad
\text{(from Lemma~\ref{approxprop})}					
\\
					& \leq &
C \frac{H^{\alpha}}{k^{\alpha/2}} + \| e_H \|_a\qquad  \text{(from Lemma~\ref{lem:ba}).}
\end{eqnarray*}
Finally, \eqref{ine:two-level-convergence} follows by noting that $2h / c \leq H \leq c 2h$.
\end{proof}

By recursively applying the two-level result Lemma \ref{lem:two-level-convergence} on two successive levels $j+1$ and $j$, we can derive the error estimate of the multilevel GCMG.  From now on, we use the script $j$ again to denote the index of the level.  Because $ 2^j h_0 / C \leq h_j \leq C 2^j h_0 $, we consider $k_j = \beta^j k_0$ for some fixed $\beta  > 0$. We have the following error estimate.

\bthm
\label{maintheor}
Let $\{  \lambda_0^{(l+i)} \varphi_0^{(l+i)}  \}_{i=0}^{k-1}$ be approximate eigenpairs of the eigenvalue $\lambda^{(l)}$ with multiplicity $k$ and $u_0$ be computed by Algorithm \ref{alg:casc}.  Let the number of smoothing steps on level $j$ be given by $k_j = \beta^j k_0$. If $h_J$ is sufficiently small, then there exists $\varphi^0 \in \text{span}\{  \varphi_0^{(l)},  \varphi_0^{(l+1)}, \cdots, \varphi_0^{(l=k-1)}\}$ such that the error of the GCMG method for the eigenvector can be estimated by
\begin{equation*}
\|  u_0  - \varphi^0 \|_a \leq 
\begin{cases}
C \frac{1}{ 1 - (4/\beta)^{\alpha/2}} \frac{h^{\alpha}_0}{k_0^{\alpha/2}}, &\quad \text{if} \ \beta > 4,  \\
C J \frac{h^{\alpha}_0}{k_0^{\alpha/2}}, &\quad \text{if} \ \beta = 4.
\end{cases}
\end{equation*}
and for the eigenvalue, by
\begin{equation*}
|  \lambda_0  - \lambda^0 | \leq 
\begin{cases}
C (\frac{1}{ 1 - (4/\beta)^{\alpha/2}})^2 \frac{h^{2\alpha}_0}{k_0^{\alpha}}, &\quad \text{if} \ \beta > 4,  \\
C J^2 \frac{h^{2\alpha}_0}{k_0^{\alpha}}, &\quad \text{if} \ \beta = 4,
\end{cases}
\end{equation*}
where $C$ denotes a constant that does not depend on the mesh size.
\ethm

\begin{proof}
Using the two level result from Lemma \ref{lem:two-level-convergence}, 
\begin{equation*}
\|  u_{j+1} - \varphi^{j+1} \|_a \leq C \frac{h_j^\alpha}{k_j^{\alpha/2}}  + \|  u_{j} - \varphi^j \|_a,
\end{equation*}
summing from $j = J-1$ to $0$, and noting that $e_J = 0$, we have
\begin{equation*}
\| u_0 - \varphi^0 \|_a  \leq  C \sum_{j = 0}^{J-1} \frac{h_j^{\alpha}}{k_j^{\alpha/2}}.
\end{equation*}
Moreover, using the identity 
\begin{equation*}
\lambda_0 - \lambda^0 = \frac{a(u_0 - \varphi^0, u_0- \varphi^0)}{(u_0, u_0)} - \lambda^0\frac{(u_0 -  \varphi^0, u_0- \varphi^0)}{(u_0, u_0)},
\end{equation*}
we have
\begin{equation*}
| \lambda_0 - \lambda^0 | \leq C  (\sum_{j = 0}^{J-1} \frac{h_j^{\alpha}}{k_j^{\alpha/2}})^2
\end{equation*}
The estimates follow directly from the following estimation
\begin{equation*}
\sum_{j=0}^{J-1} \frac{h_j^{\alpha}}{k_j^{\alpha/2}} \leq C \frac{h_0^{\alpha}}{k_0^{\alpha/2}} \sum_{j=0}^{J-1} (\frac{4}{\beta})^{\frac{j \alpha}{2}}
\end{equation*}
\end{proof}

What remains to be considered is the computational complexity.  Assuming still that $k_j = \beta^j k_0$ for some fixed $\beta  > 0$, we have the following corollary.

\bcor
Let the number of smoothing steps on level $j$ be given by $k_j = \beta^j k_0$, then the computational cost of the GCMG method is proportional to
\begin{equation*}
\sum_{j=1}^J k_j n_j  \le 
\begin{cases}
C  \frac{1}{ 1 - \beta/2^d} k_0 n_0 , &\quad \text{if} \ \beta < 2^d,  \\
C J k_0 n_0, &\quad \text{if} \ \beta = 2^d,
\end{cases}
\end{equation*}
where $d$ denotes the dimension and $C$ denotes a constant that does not depend on the mesh size.
\ecor

\begin{proof}
The result follows naturally from noting that $2^{dj}/c \le n_j \le c 2^{dj}$ and observing that
\begin{equation*}
\sum_{j=1}^J k_j n_j \le c k_0 n_0 \sum_{j=0} ^{J-1} \big( \frac{ \beta} {2^d} \big)^j
\end{equation*}
\end{proof}

We see that if we set $\beta$ to be $4<\beta<2^d$, our results regarding accuracy and complexity do not contradict. Therefore, we see that for $d=3$ our algorithm is optimal, and is sub-optimal for $d=2$.

\section{Numerical Results}
\label{numerics}

We now perform numerical tests on a variety of different graphs (listed in Table \ref{tab:test}), taken from the University of Florida Sparse Matrix Collection \cite{DH}. All of our computations were performed on a MacBook Pro PC with a 2.9 GHz Intel Core i7 Processor with 8 GB RAM. All the algorithms are implemented in the FiedComp package\footnote{\texttt{http://www.personal.psu.edu/jcu5018}}, written in MATLAB. We consider the performance of our eigensolver against the Locally Optimal Preconditioned Conjugate Gradient Method (LOPCG), with Lean Algebraic Multigrid (LAMG) as a preconditioner. The LOPCG Method is part of the MATLAB BLOPEX Package by Knyazev, and is described in \cite[Algorithm~5.1]{KN}. The LAMG preconditioner is a MATLAB package by Livine, and is described in Livine and Brandt's paper \cite{LB}. We use a residual tolerance of $.05$ for the LOPCG, and an $.1$ tolerance for the LAMG preconditioner. For our Cascadic Eigensolver, we use the tolerance $(u^k,u^{k-1}) >1-10^{-8}$. In Table \ref{tab:test} we report the run times in seconds for each graph, along with a measure of the error in the approximate eigenvector, given by $\| (L - \tilde r I) \tilde y \| $, where $ \tilde y$ is the approximate eigenvector, and $\tilde r$ is the corresponding approximate eigenvalue. Note that our eigensolver consistently outperforms the Locally Optimal Preconditioned Conjugate Gradient Method with LAMG as a preconditioner.

\begin{table}[ht]
\caption{Numerical Tests} \label{tab:test}
\begin{center}
\begin{tabular}{ l | c | c | c | c | c | c }

\multicolumn{3}{c}{} & \multicolumn{2}{|c|}{LOPCG w/ LAMG} & \multicolumn{2}{c}{CMG Eigensolver} \\
Graph & Vertices & Edges & Run Time & Error & Run Time & Error  \\ \hline \hline
144 & 144649 & 1074393 & 6.254 & 5.0e-02 & 1.911 & 6.9e-03  \\
598a &  110971 & 741934 & 4.884 & 1.6e-02 & 1.414 & 6.8e-03 \\
auto & 448695 & 3314611& 13.34 & 3.9e-02 & 6.050 & 9.2e-03 \\
brack2 & 62631 & 366559 & 2.726 & 8.7e-03 & 0.780 & 8.3e-03 \\
cs4 & 22499 & 87716 & 1.194 & 2.0e-02 & 0.271 & 1.1e-03 \\
cti & 16840 & 96464 & 1.236 & 4.5e-02 & 0.283 & 1.7e-03 \\
delaunayn15 & 32768 & 196548 & 1.614 & 9.3e-03 & 0.389 & 4.8e-03 \\
m14b & 214765 & 3358036 & 9.210 & 2.4e-02 & 2.848 & 1.0e-02 \\
PGPgiantc. & 10680 & 48680 & 0.873 & 2.4e-02 & 0.201 & 5.8e-02 \\
wing & 62032 & 243088 & 2.232 & 1.2e-02 & 0.741 & 1.1e-03 \\
\end{tabular}
\end{center}
\end{table}

We consider the number of steps of power iteration that we typically require for a given graph. We use the two dimensional Laplacian with $N=10^3$ as an example. We implement our eigensolver, with our given tolerance and report the number of subgraphs we have, the size of each subgraph, and the number of iterations required on each level in Table \ref{tab:k_power}.

\begin{table}[ht]
\caption{Graph Size and Number of Smoothing Steps by Level for 2D Laplacian, $N=100$} \label{tab:k_power}
\begin{center}
\begin{tabular}{c | c | c }
$i$ & $n_i$ & $k_i$ \\ \hline
 0 & 10000 & 3 \\
1 & 3653 & 7 \\
2 & 1195 & 10 \\
3 & 384 & 17 \\
4 & 137 & 26 \\
5 & 46 & 44  \\
6 & 14 & - \\ 
\end{tabular}
\end{center}
\end{table}

We note that we observe a similar smoothing structure on each level to the condition $k_j = \beta^j k_0$ we assumed for the proof of Theorem \ref{maintheor}. Also, we note that the coarsening appears to occur at roughly the same rate on each level, suggesting that although our heavy edge coarsening algorithm is random in nature, it typically maintains similar coarsening rates for a given graph structure. These two observations help give numerical evidence that the theoretical results from Section \ref{sec:GCMG} are robust to general graphs with heavy edge coarsening as the restriction operator.

Finally, we give an example of the application of the GCMG algorithm to the two-dimensional Laplacian, to give some numerical results to support the theoretical bounds we obtained in Section \ref{sec:GCMG}. We choose $N=1025$ and take $\beta = 4$, $k_0=1$. We give the error with respect to the difference in eigenvalue, taking $\tilde r = \tilde y ^T L \tilde y$. Our results are given in Table \ref{tab:gcmg}.

\begin{table}[ht]
\caption{Errors on Sublevels for GCMG for 2D Laplacian, $N=1025$} \label{tab:gcmg}
\begin{center}
\begin{tabular}{c | c | c }
$i$ & $| \lambda^{(2)}_i - \tilde r_0 |$ & $| \lambda^{(2)}_i - \tilde r_{k_i} |$ \\ \hline
 0 & 3.0369e-09 & 3.0198e-09 \\
1 & 1.1189e-08 & 1.0826e-08 \\
2 & 4.0447e-08 & 3.4420e-08 \\
3 & 1.8336e-07 & 8.1745e-08 \\
4 & 1.8068e-06 & 8.2292e-08 \\
\end{tabular}
\end{center}
\end{table}

\section{Conclusion} \label{sec:conclusion}

In this paper, we have presented a fast algorithm for approximately computing the Fiedler vector of a graph Laplacian. We introduced a new coarsening procedure, called heavy edge coarsening. We note the speed with which the procedure coarsens, and the quality of coarse level graphs. The main contribution to the speed of the algorithm was a result of the implementation of the heavy edge coarsening procedure. 

In addition to being a fast coarsening procedure, the heavy edge coarsening algorithm is also easier to implement than other techniques of a similar type, such as heavy edge matching and its variants (HEM and HEM*) \cite{KK1,KK2}. As a purely algebraic eigensolver, the combination of heavy edge coarsening and power iteration in a cascadic multigrid method provide a fast algorithm for finding the Fiedler vector of graph Laplacians.  Numerical results show that our eigensolver is efficient and robust for different graphs.  

Similar to the AMG method, the algebraic CMG eigensolver is difficult to analyze.  Therefore,  under a standard geometric setting, we consider the GCMG eigensolver and show that our cascadic eigensolver with power iteration as a smoother to be uniformly convergent for an elliptic eigenvalue problem discretized by standard linear finite element methods.  In the three-dimensional case, it is optimal in terms of accuracy and computational complexity. 

We believe that in future work convergence for the cascadic multigrid eigensolver could be shown in more general settings. In addition, the use of the heavy edge coarsening procedure for non-spectral methods is another avenue of research that could be explored in the future.

\section*{Acknowledgement}
The research of Jinchao Xu is partially supported by NSF Grant DMS-1217142 and NSFC Grant 91130011/A0117. The research of Ludmil Zikatanov was supported in part by NSF grant DMS-1217142, and Lawrence Livermore National Laboratory through subcontract B603526. 

\bibliography{bib_Fiedler_paper.bib}

\begin{thebibliography}{10}

\bibitem{KCH}
Y.{\nobreak} Koren, L.{\nobreak} Carmel and D.{\nobreak} Harel,
\newblock Drawing huge graphs by algebraic multigrid optimization,
\newblock {\em Multiscale Model. Simul.}, {\bf 1}:{\rm 4}~(2003), 645--673
  (electronic).

\bibitem{BS}
S.T.{\nobreak} Barnard and H.D.{\nobreak} Simon,
\newblock Fast multilevel implementation of recursive spectral bisection for
  partitioning unstructured problems,
\newblock {\em Concurrency: Practice and Experience}, {\bf 6}:{\rm 2}~(1994),
  101--117.

\bibitem{KK1}
G.{\nobreak} Karypis and V.{\nobreak} Kumar,
\newblock A fast and high quality multilevel scheme for partitioning irregular
  graphs,
\newblock {\em SIAM J. Sci. Comput.}, {\bf 20}:{\rm 1}~(1998), 359--392
  (electronic).

\bibitem{KK2}
G.{\nobreak} Karypis and V.{\nobreak} Kumar,
\newblock Multilevel k-way partitioning scheme for irregular graphs,
\newblock {\em Journal of Parallel and Distributed Computing}, {\bf 48}:{\rm
  1}~(1998), 96 -- 129.

\bibitem{SV}
G.L.G.{\nobreak} Sleijpen and H.A.Van~der {\nobreak} Vorst,
\newblock A {J}acobi-{D}avidson iteration method for linear eigenvalue
  problems,
\newblock {\em SIAM Rev.}, {\bf 42}:{\rm 2}~(2000), 267--293 (electronic).

\bibitem{KN}
A.V.{\nobreak} Knyazev,
\newblock Toward the optimal preconditioned eigensolver: locally optimal block
  preconditioned conjugate gradient method,
\newblock {\em SIAM J. Sci. Comput.}, {\bf 23}:{\rm 2}~(2001), 517--541
  (electronic),
\newblock Copper Mountain Conference (2000).

\bibitem{LAOK}
I.{\nobreak} Lashuk, M.{\nobreak} Argentati, E.{\nobreak} Ovtchinnikov and
  A.{\nobreak} Knyazev,
\newblock Preconditioned eigensolver {LOBPCG} in hypre and {PETS}c,
\newblock Domain decomposition methods in science and engineering {XVI},
  volume~55 of {\em Lect. Notes Comput. Sci. Eng.}, pages 635--642, Springer,
  Berlin, 2007.

\bibitem{LB}
O.E.{\nobreak} Livne and A.{\nobreak} Brandt,
\newblock Lean algebraic multigrid ({LAMG}): fast graph {L}aplacian linear
  solver,
\newblock {\em SIAM J. Sci. Comput.}, {\bf 34}:{\rm 4}~(2012), B499--B522.

\bibitem{BD1}
F.A.{\nobreak} Bornemann and P.{\nobreak} Deuflhard,
\newblock The cascadic multigrid method for elliptic problems,
\newblock {\em Numer. Math.}, {\bf 75}:{\rm 2}~(1996), 135--152.

\bibitem{BD2}
F.A.{\nobreak} Bornemann and P.{\nobreak} Deuflhard,
\newblock Cascadic multigrid methods,
\newblock Domain decomposition methods in sciences and engineering ({B}eijing,
  1995), pages 205--212, Wiley, Chichester, 1997.

\bibitem{BDL}
D.{\nobreak} Braess, P.{\nobreak} Deuflhard and K.{\nobreak} Lipnikov,
\newblock A subspace cascadic multigrid method for mortar elements,
\newblock {\em Computing}, {\bf 69}:{\rm 3}~(2002), 205--225.

\bibitem{S1}
V.{\nobreak} Sha{\u\i}durov,
\newblock The convergence of the cascadic conjugate-gradient method under a
  deficient regularity,
\newblock Problems and methods in mathematical physics ({C}hemnitz, 1993),
  volume 134 of {\em Teubner-Texte Math.}, pages 185--194, Teubner, Stuttgart,
  1994.

\bibitem{S2}
V.V.{\nobreak} Shaidurov,
\newblock Cascadic algorithm with nested subspaces in domains with curvilinear
  boundary,
\newblock Advanced mathematics: computations and applications ({N}ovosibirsk,
  1995), pages 588--595, NCC Publ., Novosibirsk, 1995.

\bibitem{S3}
V.V.{\nobreak} Shaidurov,
\newblock Some estimates of the rate of convergence for the cascadic
  conjugate-gradient method,
\newblock {\em Comput. Math. Appl.}, {\bf 31}:{\rm 4-5}~(1996), 161--171,
\newblock Selected topics in numerical methods (Miskolc, 1994).

\bibitem{F1}
M.{\nobreak} Fiedler,
\newblock Algebraic connectivity of graphs,
\newblock {\em Czechoslovak Math. J.}, {\bf 23(98)}~(1973), 298--305.

\bibitem{F2}
M.{\nobreak} Fiedler,
\newblock A property of eigenvectors of nonnegative symmetric matrices and its
  application to graph theory,
\newblock {\em Czechoslovak Math. J.}, {\bf 25(100)}:{\rm 4}~(1975), 619--633.

\bibitem{2003KimH_XuJ_ZikatanovL-aa}
H.{\nobreak} Kim, J.{\nobreak} Xu and L.{\nobreak} Zikatanov,
\newblock A multigrid method based on graph matching for convection-diffusion
  equations,
\newblock {\em Numer. Linear Algebra Appl.}, {\bf 10}:{\rm 1-2}~(2003),
  181--195,
\newblock Dedicated to the 60th birthday of Raytcho Lazarov.

\bibitem{GV}
G.H.{\nobreak} Golub and C.F.{\nobreak} Van~Loan,
\newblock Matrix computations,
\newblock Johns Hopkins Studies in the Mathematical Sciences, Johns Hopkins
  University Press, Baltimore, MD, fourth edition, 2013.

\bibitem{BO}
I.{\nobreak} Babu{\v{s}}ka and J.E.{\nobreak} Osborn,
\newblock Finite element-{G}alerkin approximation of the eigenvalues and
  eigenvectors of selfadjoint problems,
\newblock {\em Math. Comp.}, {\bf 52}:{\rm 186}~(1989), 275--297.

\bibitem{Gallistl.D.2014a}
D.{\nobreak} Gallistl,
\newblock Adaptive Finite Element Computation of Eigenvalues,
\newblock PhD thesis, der Humboldt-Universit\"{a}t zu Berlin, 2014.

\bibitem{BOF}
D.{\nobreak} Boffi,
\newblock Finite element approximation of eigenvalue problems,
\newblock {\em Acta Numerica}, {\bf 19}~(2010), 1--120.

\bibitem{Berg.J;Lofstrom.J.1976a}
J.{\nobreak} Berg and J.{\nobreak} Lofstrom,
\newblock Interpolation spaces. An introduction,
\newblock Springer, 1976.

\bibitem{DH}
T.A.{\nobreak} Davis and Y.{\nobreak} Hu,
\newblock The {U}niversity of {F}lorida sparse matrix collection,
\newblock {\em ACM Trans. Math. Software}, {\bf 38}:{\rm 1}~(2011), Art. 1, 25.

\end{thebibliography}

\end{document}